\newtheorem{thm}{Theorem}[section]
\newtheorem{cor}[thm]{Corollary}
\newtheorem{lem}[thm]{Lemma}
\newtheorem{prop}[thm]{Proposition}
\theoremstyle{definition}
\theoremstyle{remark}
\numberwithin{equation}{section}
\newtheorem{exa}[thm]{Example}
\newcommand{\h}{\mathcal{H}}
\begin{document}

\title[More sums of Hilbert space frames]
{More sums of Hilbert space frames}%

\author[A. Najati,  M.R. Abdollahpour, E. Osgooei and M.M. Saem ]
{ A. Najati$^1$,  M.R. Abdollahpour$^1$, E. Osgooei$^2$ and M.M. Saem$^1$ }%
\address{
\newline
\indent $^1$Department of Mathematical Sciences
\newline \indent   University of Mohaghegh Ardabili
\newline \indent  Ardabil 56199-11367
\newline \indent Iran}
\email{a.nejati@yahoo.com} \email{mrabdollahpour@yahoo.com}
\email{m.mohammadisaem@yahoo.com}
\address{
\newline
\indent $^2$Faculty of Mathematical Sciences
\newline \indent   University of Tabriz
\newline \indent  Tabriz
\newline \indent Iran}
\email{osgooei@tabrizu.ac.ir}
\subjclass[2000]{Primary 41A58, 42C15}
\keywords{frame, Gabor frame, frame operator.}


\begin{abstract}
In this paper we have some new results on sums of Hilbert space
frames and Riesz bases. We also have a correction for some results
in "S. Obeidat et al., Sums of Hilbert space frames, J. Math. Anal.
Appl. 351 (2009) 579--585."
\end{abstract}
\maketitle
\section{ Introduction }
Throughout this paper, $\h$ denotes a separable Hilbert space with
the inner product $\langle.,.\rangle$. Recall that a  sequence
$\{f_{i}\}_{i\in I }\subseteq{\h}$ is a frame for $\h$  if there
exist  $0<A\leqslant B<\infty$ such that
\begin{equation}\label{frame}
A\|f\|^{2}\leqslant \sum_{i\i\in I }|\langle
f,f_{i}\rangle|^{2}\leqslant B\|f\|^{2}
\end{equation}
for all $f\in \h.$ The constants $A$ and $B$ are called a lower and
upper frame bound.
\par
If $\{f_{i}\}_{i\in I }\subseteq{\h}$ is a frame for ${\h}$, the
\textit{frame operator} for $\{f_{i}\}_{i\in I}$ is the bounded
linear operator $S:{\h}\rightarrow{\h}$ given by $Sf=\sum_{i\in
I}\langle f,f_i\rangle f_i.$ Therefore $\langle
Sf,f\rangle=\sum_{i\i\in I }|\langle f,f_{i}\rangle|^{2}$ for all
$f\in \h.$ It follows that $S$ is  positive and invertible. This
provides the frame decomposition
\[
f=\sum_{i\in I}\langle f,S^{-1}f_i\rangle
f_i=\sum_{i\in I}\langle f,f_i\rangle S^{-1}f_i
\]
 for all $f\in{\h}.$

\section{ Main results }
The following  is proved in [3, Proposition 2.1].
\begin{prop}\label{pro1}\cite{sum}
Let $\{f_i\}_{i\in I}$ be a frame for $\h$ with the frame operator
$S$, frame bounds $A\leqslant B$ and let $L:\h\longrightarrow \h$ be
a bounded operator. Then $\{Lf_i\}_{i\in I}$ is a frame for $\h$ if
and only if $L$ is invertible on $\h$. Moreover, in this case the
frame operator for $\{Lf_i\}_{i\in I}$ is $L S L^*$ and the new
frame bounds are $A\|L^{-1}\|^{-2}, B\|L\|^2$.
\end{prop}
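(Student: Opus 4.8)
The statement really has three parts: the ``if'' direction together with the two explicit bounds, the formula for the frame operator, and the ``only if'' direction. For the ``if'' direction I would argue by a change of variables. For $f\in\h$ one has $\langle f,Lf_i\rangle=\langle L^*f,f_i\rangle$, hence $\sum_{i\in I}|\langle f,Lf_i\rangle|^2=\sum_{i\in I}|\langle L^*f,f_i\rangle|^2$, and applying the frame inequalities for $\{f_i\}$ to the vector $L^*f$ gives $A\|L^*f\|^2\le\sum_{i\in I}|\langle f,Lf_i\rangle|^2\le B\|L^*f\|^2$. The upper side is finished by $\|L^*f\|\le\|L^*\|\,\|f\|=\|L\|\,\|f\|$, giving the bound $B\|L\|^2$; for the lower side I would use $\|f\|=\|(L^*)^{-1}L^*f\|\le\|(L^*)^{-1}\|\,\|L^*f\|$ together with $\|(L^*)^{-1}\|=\|(L^{-1})^*\|=\|L^{-1}\|$, so that $\|L^*f\|\ge\|L^{-1}\|^{-1}\|f\|$ and the lower bound is $A\|L^{-1}\|^{-2}$. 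Only the boundedness of $L$ and of $L^{-1}$ enter, nothing deeper.

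For the frame operator $\widetilde S$ of $\{Lf_i\}$ I would compute directly
\[
\widetilde Sf=\sum_{i\in I}\langle f,Lf_i\rangle Lf_i=\sum_{i\in I}\langle L^*f,f_i\rangle Lf_i=L\Big(\sum_{i\in I}\langle L^*f,f_i\rangle f_i\Big)=LS(L^*f),
\]
the interchange of $L$ with the norm-convergent sum being justified by continuity of $L$; hence $\widetilde S=LSL^*$.

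The ``only if'' direction is where I expect the real obstacle, and I suspect it is exactly the point that the abstract promises to correct. Assuming $\{Lf_i\}$ is a frame with lower bound $C>0$, the chain $C\|f\|^2\le\sum_{i\in I}|\langle L^*f,f_i\rangle|^2\le B\|L^*f\|^2$ shows $L^*$ is bounded below; equivalently $LL^*$ dominates $(C/B)I$, hence is positive and invertible, so $L$ is \emph{surjective}. The sticking point is injectivity of $L$: boundedness below of $L^*$ only gives that $L^*\h$ is closed and $\ker L=(L^*\h)^{\perp}$, which need not be $\{0\}$. In fact this gap cannot be closed, since the statement as written is false: if $\{f_i\}=\{e_i\}_{i\ge 1}$ is an orthonormal basis of $\h=\ell^2$ and $L$ is the unilateral left shift ($Le_1=0$, $Le_{i+1}=e_i$), then $\{Lf_i\}=\{0,e_1,e_2,\dots\}$ is a Parseval frame while $L$ is not invertible. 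So the honest (and, I expect, the corrected) conclusion is that $\{Lf_i\}$ is a frame if and only if $L$ is surjective --- equivalently, bounded below on $(\ker L)^{\perp}$ --- with the lower frame bound expressed through the pseudo-inverse $L^{\dagger}$ in place of $L^{-1}$.
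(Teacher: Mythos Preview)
Your analysis is correct and matches the paper's treatment exactly. The paper does not prove Proposition~2.1; rather, it quotes it from \cite{sum} and then refutes the ``only if'' direction with precisely the counterexample you give---the unilateral left shift $L$ on an orthonormal basis (Example~2.2)---before stating the corrected version with $L$ surjective and the lower bound $A\|L^{\dagger}\|^{-2}$ (Proposition~2.3). Your additional verification of the ``if'' direction and of $\widetilde S=LSL^*$ is fine and more detailed than what the paper writes, but the essential content---the gap in the ``only if'' direction, the shift counterexample, and the pseudo-inverse correction---is identical to the paper's.
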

In this note, we show that  Proposition \ref{pro1}  is not true in
general. Indeed,  if $\{f_i\}_{i\in I}$ is a frame for Hilbert space
$\h$ and $L:\h \longrightarrow \h$ is a bounded invertible operator,
then $\{Lf_i\}_{i\in I}$ is a frame for $\h$ but the inverse is not
true in general. In the proof of Proposition \ref{pro1}, the authors
proved that $LSL^*$ is invertible. It does not imply that $L$ is
invertible on $\h$. It should be noted that Proposition \ref{pro1}
has been used in Corollaries 2.2 , 2.3 and in the proof of
Proposition 4.1 of \cite{sum}.
\begin{exa}\label{el}
Let $\{e_n\}_{n=1}^{\infty}$ be an orthonormal basis for a Hilbert
space $\h$. Define a shift operator $L$ on $\h$ by $L(e_n)=e_{n-1}$
if $n>1$ and $L(e_1)=0$. It is clear that
$\{L(e_n)\}_{n=1}^{\infty}$ is a frame for $\h$, but $L$ is not
invertible although $LL^*=I$. Moreover,
$\{L^*(e_n)\}_{n=1}^{\infty}$ is not a frame for $\h$.
\end{exa}
We can improve Proposition \ref{pro1} as follows:
\begin{prop}\label{pro}
Let $\{f_i\}_{i\in I}$ be a frame for $\h$ with the frame operator
$S$, frame bounds $A\leqslant B$ and let $L:\h\longrightarrow \h$ be
a bounded operator. Then $\{Lf_i\}_{i\in I}$ is a frame for $\h$ if
and only if $L$ is surjective. Moreover, in this case the frame
operator for $\{Lf_i\}_{i\in I}$ is $L S L^*$ and the new frame
bounds are $A\|L^{\dag}\|^{-2}$ and $B\|L\|^2$, where $L^{\dag}$ is
the pseudo-inverse of $L$.
\end{prop}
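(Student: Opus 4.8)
The plan is to prove the two implications of the equivalence separately, then compute the frame operator and the new bounds. Throughout I will use the fact (standard, and implicit in the setup) that a sequence $\{g_i\}_{i\in I}$ in $\h$ is a frame if and only if its synthesis operator $T\colon \ell^2(I)\to\h$, $T(\{c_i\})=\sum_i c_i g_i$, is a well-defined bounded surjective operator; the optimal lower frame bound is then $\|T^\dag\|^{-2}$ and the optimal upper bound is $\|T\|^2$. Let $T_0$ denote the synthesis operator of the given frame $\{f_i\}$, so $S=T_0T_0^*$, and observe that the synthesis operator of $\{Lf_i\}$ is exactly $LT_0$.

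First I would do the easy direction. If $L$ is surjective, then $LT_0$ is a composition of two bounded surjective operators, hence bounded and surjective, so $\{Lf_i\}$ is a frame. For the converse, suppose $\{Lf_i\}$ is a frame; then $LT_0$ is surjective, and since $\operatorname{ran}(LT_0)\subseteq \operatorname{ran}(L)$, it follows immediately that $L$ is surjective. This is the entire content of the equivalence, and it is genuinely short — the point of the proposition (versus the flawed Proposition \ref{pro1}) is precisely that surjectivity, not invertibility, is the right condition, as Example \ref{el} illustrates.

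Next, the frame operator. The frame operator of $\{Lf_i\}$ is the synthesis composed with its adjoint, i.e. $(LT_0)(LT_0)^* = LT_0T_0^*L^* = LSL^*$, which gives the stated formula directly. For the upper bound: $\sum_i |\langle f, Lf_i\rangle|^2 = \sum_i |\langle L^*f, f_i\rangle|^2 \le B\|L^*f\|^2 \le B\|L\|^2\|f\|^2$, using the upper bound of the original frame and $\|L^*\|=\|L\|$. The lower bound is where I expect the only real friction. Since $L$ is surjective and bounded, its Moore–Penrose pseudo-inverse $L^\dag$ is bounded, $LL^\dag = I_{\h}$ (the identity, because $L$ is onto), and $L^\dag$ is a right inverse whose range is $(\Ke L)^\perp$. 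The key estimate is that for every $f\in\h$, $\|f\| = \|LL^\dag f\| $ and, dually, $\|L^*f\| \ge \|L^\dag\|^{-1}\|f\|$; this follows from writing $\|f\|=\|(L^\dag)^* L^* f\|\le \|L^\dag\|\,\|L^*f\|$, using $\|(L^\dag)^*\| = \|L^\dag\|$ and the identity $(L^\dag)^*L^* = (LL^\dag)^* = I$. Plugging this into $\sum_i |\langle f, Lf_i\rangle|^2 = \sum_i|\langle L^*f,f_i\rangle|^2 \ge A\|L^*f\|^2 \ge A\|L^\dag\|^{-2}\|f\|^2$ yields the claimed lower bound $A\|L^\dag\|^{-2}$.

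So the structure is: (1) reduce everything to synthesis operators; (2) dispatch the equivalence via $\operatorname{ran}(LT_0)\subseteq\operatorname{ran}(L)$ and composition of surjections; (3) compute $LSL^*$; (4) get the upper bound trivially from $\|L^*\|=\|L\|$; (5) get the lower bound from the operator identity $(LL^\dag)^*=I$ together with $\|L^\dag\| = \|(L^\dag)^*\|$. The main obstacle — really the only subtle point — is step (5), specifically justifying $LL^\dag=I_\h$ (as opposed to a projection) from surjectivity of $L$, and then correctly taking adjoints to transfer the bound onto $\|L^*f\|$; everything else is bookkeeping. I would be careful to state which standard facts about the pseudo-inverse of a bounded surjective Hilbert-space operator I am invoking, since for non-closed-range operators $L^\dag$ need not be bounded — but surjectivity forces the range to be closed (indeed all of $\h$), so this is not an issue here.
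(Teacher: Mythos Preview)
Your argument is correct. The paper's own proof is much terser: for the direction ``$\{Lf_i\}$ a frame $\Rightarrow$ $L$ surjective'' it observes that the frame operator $LSL^*$ is invertible and hence $L$ is onto (essentially your range-containment argument rephrased via $S=T_0T_0^*$), and for the converse together with the explicit bounds it simply cites Corollary~5.3.2 of Christensen's book \cite{Ole}. Your proof, by contrast, is self-contained: you derive the lower bound directly from the identity $(LL^\dag)^*=(L^\dag)^*L^*=I$ and the resulting estimate $\|L^*f\|\geqslant\|L^\dag\|^{-1}\|f\|$. What you gain is an elementary, citation-free argument that makes transparent exactly where surjectivity (as opposed to invertibility) enters, namely in the equality $LL^\dag=I_{\h}$ rather than a mere projection; what the paper's approach buys is brevity by outsourcing the standard computation to the literature.
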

\begin{proof}
If $\{Lf_i\}_{i\in I}$ is a frame for $\h$, then its frame operator
$LSL^*$ is invertible. So $L$ is surjective. The converse follows
from Corollary 5.3.2 of \cite{Ole}.
\end{proof}
We also have
\begin{prop}
Let $\{f_i\}_{i\in I}$ be a frame for $\h$ with the frame operator
$S$ and let $L:\h\longrightarrow \h$ be a bounded operator. Then
$\{Lf_i\}_{i\in I}$ and $\{L^*f_i\}_{i\in I}$ are frame for $\h$ if
and only if $L$ is invertible. Moreover, in this case the frame
operators for $\{Lf_i\}_{i\in I}$ and $\{L^*f_i\}_{i\in I}$ are  $L
S L^*$ and $L^* S L$, respectively.
\end{prop}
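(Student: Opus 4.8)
The plan is to exploit the equivalence already established in Proposition \ref{pro}, namely that $\{Lf_i\}_{i\in I}$ is a frame for $\h$ iff $L$ is surjective. Applying this criterion to the operator $L$ gives surjectivity of $L$, and applying it to the operator $L^*$ gives surjectivity of $L^*$. So the crux of the proof is the purely operator-theoretic fact that a bounded operator $L$ on a Hilbert space is invertible if and only if both $L$ and $L^*$ are surjective.

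For that fact, I would argue as follows. If $L$ is invertible, then $L^*$ is invertible as well (with inverse $(L^{-1})^*$), so both are surjective; this direction is immediate. For the converse, suppose $L$ and $L^*$ are both surjective. Surjectivity of $L^*$ forces $L$ to be injective, since $\Ke L = (\text{ran}\, L^*)^{\perp} = \h^{\perp} = \{0\}$. Thus $L$ is a bounded bijection of $\h$ onto $\h$, and by the open mapping theorem (or the bounded inverse theorem) $L^{-1}$ is bounded, i.e. $L$ is invertible. I expect this step to be entirely routine; the only mild subtlety worth stating explicitly is the identity $\Ke L = (\operatorname{ran} L^*)^{\perp}$, which is standard.

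Having established invertibility of $L$, the statement about the frame operators follows directly from the ``moreover'' part of Proposition \ref{pro} (or equally from Proposition \ref{pro1} once $L$ is known invertible): the frame operator of $\{Lf_i\}_{i\in I}$ is $LSL^*$, and applying the same formula to the invertible operator $L^*$ shows that the frame operator of $\{L^*f_i\}_{i\in I}$ is $L^*S(L^*)^* = L^*SL$. Since $L$ invertible is exactly the hypothesis needed to invoke these formulas, no further work is required here.

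In short, there is no real obstacle: the proposition is essentially a packaging of Proposition \ref{pro} together with the elementary duality $\Ke L = (\operatorname{ran} L^*)^{\perp}$ and the bounded inverse theorem. If one wished to avoid even mentioning the pseudo-inverse, one could alternatively observe directly that a surjective operator whose adjoint is also surjective is bounded below (being injective with closed range) and hence invertible, but routing through Proposition \ref{pro} keeps the exposition shortest.
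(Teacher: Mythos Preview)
Your proposal is correct and follows essentially the same route as the paper: the paper argues that the frame operators $LSL^*$ and $L^*SL$ are invertible and concludes directly that $L$ is invertible, which is precisely the surjectivity-of-$L$-and-$L^*$ argument you spell out via Proposition~\ref{pro}. The only difference is that you make explicit the step $\Ke L = (\operatorname{ran} L^*)^\perp$ and invoke the bounded inverse theorem, whereas the paper leaves this implicit.
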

\begin{proof}
If $\{Lf_i\}_{i\in I}$ and $\{L^*f_i\}_{i\in I}$ are frames for
$\h$, then their frame operators $LSL^*$ and $L^* S L$ are
invertible. So $L$ is invertible. The converse is clear.
\end{proof}
In \cite{sum}, corollary 2.2 can be improved as
below.
\begin{cor}
Let $\{f_i\}_{i\in I}$ be a frame for $\h$ with the frame operator
$S$, frame bounds $A\leqslant B$ and let $L:\h\longrightarrow \h$ be
a bounded operator, then $\{f_i+Lf_i\}_{i\in I}$ is a frame for $\h$
if and only if $I+L$ is surjective. Moreover, in this case the frame
operator for the new frame is $(I+L) S (I+L^*)$ with the frame
bounds  $A\|(I+L)^{\dag}\|^{-2}$ and $B\|I+L\|^2$, where
$(I+L)^{\dag}$ is the pseudo-inverse of $I+L$. In particular, if $L$
is a positive operator (or just $L>-I$), then $\{f_i+Lf_i\}_{i\in
I}$ is a frame for $\h$ with the frame operator $S+SL+SL^*+LSL^*.$
\end{cor}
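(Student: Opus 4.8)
The plan is to derive the corollary directly from the improved Proposition \ref{pro} by substituting the operator $L$ there with $I+L$. Indeed, $\{f_i + Lf_i\}_{i\in I} = \{(I+L)f_i\}_{i\in I}$, so Proposition \ref{pro} applied to the bounded operator $I+L$ immediately gives that this is a frame for $\h$ if and only if $I+L$ is surjective, that the frame operator is $(I+L)S(I+L)^* = (I+L)S(I+L^*)$, and that the new frame bounds are $A\|(I+L)^{\dag}\|^{-2}$ and $B\|I+L\|^2$. So the only real content beyond a change of variables is the final sentence about the positive (or $L > -I$) case.

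For that last part, first I would note that if $L$ is positive, or more generally $L > -I$ in the sense that $I + L$ is a positive operator bounded below away from $0$, then $I+L$ is in particular surjective (a positive operator that is bounded below is invertible), so the first half of the corollary applies and $\{f_i + Lf_i\}_{i\in I}$ is a frame. Then I would expand the frame operator: since $I+L$ is self-adjoint in this case we have $(I+L)^* = I + L^*$ regardless, but the point worth recording is the algebraic expansion
\[
(I+L)S(I+L^*) = S + SL^* + LS + LSL^*,
\]
which matches the stated $S + SL + SL^* + LSL^*$ once one uses $L = L^*$ in the positive case (or just leaves it in the general self-adjoint-free form — the statement as written implicitly assumes $L$ self-adjoint here, which is consistent with "positive" but one should be slightly careful with the "$L > -I$" phrasing).

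The one subtlety — and the step I'd expect to need the most care — is justifying "$I+L$ surjective" from "$L > -I$": the notation $L > -I$ should be read as $I + L$ being a positive \emph{invertible} operator (equivalently, $\langle (I+L)f, f\rangle \geq \delta \|f\|^2$ for some $\delta > 0$), since a merely positive operator need not be surjective. Under that reading, invertibility of $I+L$ gives surjectivity and the conclusion follows. I would therefore state this hypothesis precisely and then the proof is essentially: apply Proposition \ref{pro} with $L$ replaced by $I+L$, and expand the product for the frame operator. I do not expect any genuine obstacle here; the corollary is a specialization plus an elementary operator-algebra identity.
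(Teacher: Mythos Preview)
Your proposal is correct and matches the paper's approach: the paper states this corollary without proof, treating it as an immediate consequence of Proposition~\ref{pro} applied with $I+L$ in place of $L$, which is exactly what you do. Your discussion of the $L>-I$ case and the expansion of the frame operator is more careful than the paper itself, which simply records the formula; in particular your caution about reading $L>-I$ as ``$I+L$ positive and bounded below'' is warranted, and your observation that the expansion literally gives $S+SL^*+LS+LSL^*$ (rather than the paper's $S+SL+SL^*+LSL^*$) correctly flags what appears to be a typo in the original statement.
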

\begin{cor}
Let $\{f_i\}_{i\in I}$ be a frame for $\h$ and $P:\h\longrightarrow \h$ be
a bounded operator. If $P^2=P,$ then for all $a\neq -1,$ $\{f_i+aPf_i\}_{i\in I}$ is a frame for $\h.$
\end{cor}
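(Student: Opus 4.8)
The plan is to reduce this to the improved corollary immediately preceding it by writing $f_i + aPf_i = (I+aP)f_i$, so that $\{f_i+aPf_i\}_{i\in I}=\{(I+aP)f_i\}_{i\in I}$. By that corollary (applied with $L=aP$), it suffices to show that $I+aP$ is surjective; I will in fact show it is invertible, which is the only real content of the statement.

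To prove $I+aP$ is invertible when $a\neq-1$, the cleanest route is to exhibit the inverse explicitly. Since $P^2=P$, a direct computation gives
\[
(I+aP)\Bigl(I-\tfrac{a}{1+a}P\Bigr)=I+aP-\tfrac{a}{1+a}P-\tfrac{a^2}{1+a}P^2=I+\Bigl(a-\tfrac{a}{1+a}-\tfrac{a^2}{1+a}\Bigr)P=I,
\]
and the same computation with the factors in the other order also yields $I$. Hence $I+aP$ is a bounded invertible operator on $\h$ with bounded inverse $I-\tfrac{a}{1+a}P$. (An alternative, slightly more conceptual, argument: $P^2=P$ forces the spectrum of $P$ to satisfy $\sigma(P)\subseteq\{0,1\}$, so by the spectral mapping theorem $\sigma(I+aP)\subseteq\{1,\,1+a\}$, which does not contain $0$ precisely when $a\neq-1$.)

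Once $I+aP$ is known to be invertible, it is in particular surjective, so the preceding corollary applies and shows $\{(I+aP)f_i\}_{i\in I}=\{f_i+aPf_i\}_{i\in I}$ is a frame for $\h$; if desired, one can even record the new frame bounds $A\|(I+aP)^{-1}\|^{-2}$ and $B\|I+aP\|^2$ and the frame operator $(I+aP)S(I+aP^*)$, where $S$ and $A\le B$ are the frame operator and bounds of $\{f_i\}_{i\in I}$.

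There is essentially no serious obstacle here: the whole statement hinges on the elementary fact that an idempotent $P$ has $I+aP$ invertible for $a\neq-1$. The one point to be slightly careful about is that $P$ is only assumed to be a bounded idempotent, not an orthogonal projection (it need not be self-adjoint), so one should not invoke any spectral-theoretic fact that requires self-adjointness; the explicit inverse formula $I-\tfrac{a}{1+a}P$ sidesteps this entirely, and the failure case $a=-1$ is exactly where that formula breaks down (and indeed where $I+aP=I-P$ can be non-surjective, e.g. when $P$ is a nontrivial projection).
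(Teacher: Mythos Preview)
Your proof is correct and follows essentially the same approach as the paper: both arguments exhibit the explicit inverse $(I+aP)^{-1}=I-\tfrac{a}{1+a}P$ via the idempotent identity $P^2=P$, and then conclude that $\{(I+aP)f_i\}_{i\in I}$ is a frame. Your write-up is more detailed (and the spectral aside is a nice touch), but the core idea is identical to the paper's one-line computation.
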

\begin{proof}
If $a\neq -1,$ then we have $(I+aP)(I-\frac{a}{a+1}P)=I.$ This implies that $I+aP$ is invertible and so
$\{f_i+aPf_i\}_{i\in I}$ is a frame for $\h.$
\end{proof}
\begin{prop}
Let $\{f_i\}_{i\in I}$ be a sequence in $\h$ such that $\sum_{i\in
I}\langle f,f_i\rangle f_i$ converges for all $f\in\h$. If
$L:\h\longrightarrow \h$ is a bounded operator such that
$\{Lf_i\}_{i\in I}$ and $\{L^*f_i\}_{i\in I}$ are frames for $\h$,
then  $\{f_i\}_{i\in I}$ is a frame for $\h$.
\end{prop}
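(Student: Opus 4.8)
The plan is to read the hypothesis as saying that the frame-operator-type map $S\colon\h\To\h$, $Sf=\sum_{i\in I}\langle f,f_i\rangle f_i$, is everywhere defined, and then to show that $S$ is bounded, positive, self-adjoint and \emph{invertible}: invertibility of $S$ will supply a lower frame bound, while boundedness will supply the Bessel (upper) bound, and together these say precisely that $\{f_i\}_{i\in I}$ is a frame. First I would verify the basic properties of $S$. Writing $S_nf=\sum_{i\in I_n}\langle f,f_i\rangle f_i$ for an increasing sequence of finite sets $I_n$ exhausting $I$, each $S_n$ is a finite-rank positive self-adjoint operator and $S_nf\To Sf$ for every $f$; the uniform boundedness principle makes $S$ bounded, and passing to the limit in $\langle S_nf,g\rangle$ shows that $S$ is self-adjoint, positive, and satisfies $\langle Sf,f\rangle=\sum_{i\in I}|\langle f,f_i\rangle|^2$ for all $f$. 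In particular $\{f_i\}_{i\in I}$ is Bessel, so only a lower bound is in question.

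Next I would bring in $L$. For every $f\in\h$,
\[
\sum_{i\in I}|\langle f,Lf_i\rangle|^2=\sum_{i\in I}|\langle L^*f,f_i\rangle|^2=\langle SL^*f,L^*f\rangle=\langle LSL^*f,f\rangle,
\]
and similarly $\sum_{i\in I}|\langle f,L^*f_i\rangle|^2=\langle L^*SLf,f\rangle$. Since $\{Lf_i\}_{i\in I}$ and $\{L^*f_i\}_{i\in I}$ are frames, there are $A_1,A_2>0$ with $\langle LSL^*f,f\rangle\geq A_1\|f\|^2$ and $\langle L^*SLf,f\rangle\geq A_2\|f\|^2$ for all $f$; being positive self-adjoint operators bounded below, $LSL^*$ and $L^*SL$ are both invertible.

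Then I would deduce that $L$ is invertible. Since $\mathrm{ran}(LSL^*)\subseteq\mathrm{ran}(L)$ and $LSL^*$ is onto, $L$ is onto; by the symmetric argument with $L^*SL$, the operator $L^*$ is also onto. A bounded operator $L$ for which both $L$ and $L^*$ are surjective is invertible: surjectivity of $L^*$ forces $L$ injective with closed range, hence bounded below, and then surjectivity of $L$ together with the open mapping theorem gives a bounded inverse. Consequently $S=L^{-1}(LSL^*)(L^*)^{-1}$ is a product of invertible operators, so $S$ is invertible; being positive as well, $S\geq A_0I$ for some $A_0>0$, i.e.\ $\sum_{i\in I}|\langle f,f_i\rangle|^2=\langle Sf,f\rangle\geq A_0\|f\|^2$. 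With the Bessel bound from the first step, this shows $\{f_i\}_{i\in I}$ is a frame. (Equivalently, once $L$ is known to be invertible one may write $\{f_i\}_{i\in I}=\{L^{-1}(Lf_i)\}_{i\in I}$ and invoke Proposition \ref{pro} applied to the frame $\{Lf_i\}_{i\in I}$ and the invertible operator $L^{-1}$.)

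The step I expect to be the crux is the passage from ``$LSL^*$ and $L^*SL$ invertible'' to ``$L$ invertible''. One should not hope to get this from invertibility of $LSL^*$ alone --- Example \ref{el} is exactly a situation where $\{Lf_i\}_{i\in I}$ is a frame (so $LSL^*$ is invertible) while $L$ is not invertible --- so both hypotheses are genuinely needed, one giving surjectivity of $L$ and the other surjectivity of $L^*$. It is also worth being careful to extract surjectivity of $L$ from the inclusion of ranges rather than from mere density of $\mathrm{ran}(L)$, which would not be enough.
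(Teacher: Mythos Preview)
Your proof is correct and follows essentially the same route as the paper: define the pre-frame operator $S$ (the paper calls it $U$), identify $LSL^*$ and $L^*SL$ as the frame operators of $\{Lf_i\}$ and $\{L^*f_i\}$, use their invertibility to conclude that $L$ and $L^*$ are both surjective, hence $L$ is invertible, and then write $S=L^{-1}(LSL^*)(L^*)^{-1}$. The only difference is one of detail: you spell out why $S$ is bounded (via the uniform boundedness principle) and why invertibility of a positive $S$ yields a lower frame bound, whereas the paper takes these for granted.
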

\begin{proof}
Let us define
\[U:{\h}\longrightarrow \h,\quad U(f):=\sum_{i\in
I}\langle f,f_i\rangle f_i.\] Let $S_L$ be the frame operator for
$\{Lf_i\}_{i\in I}$. Then $S_L=LUL^*$ is invertible. So $L$ is
surjective. Similarly, we infer that $L^*$ is surjective. Therefore
$L$ is invertible and so $\{f_i\}_{i\in I}$ is a frame for $\h$ with
the frame operator $L^{-1}S_L(L^*)^{-1}$.
\end{proof}
\begin{prop}
Let $\{f_{i}\}_{i\in I}$ be a Riesz basis for $\h$ with analysis opeartor
$T$, Riesz basis bounds $A\leq B,$ and let $L:\h\longrightarrow \h$ be a bounded opeartor.
Then $\{Lf_{i}\}_{i\in I}$ is a Riesz basis for $\h$ if and only if
$L$ is invertible on $\h$. Moreover in this case the analysis operator
for $\{Lf_{i}\}_{i\in I}$ is $T_{L}=TL^{*}$ and
the new Riesz basis bounds are $\parallel L^{-1}\parallel^{-2}A,$ $\parallel L\parallel^{2}B.$
\end{prop}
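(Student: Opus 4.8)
The plan is to reduce the equivalence to the standard description of a Riesz basis as the image of an orthonormal basis under a bounded invertible operator, and then to read off the analysis operator and the new bounds by direct computation. Since $\{f_i\}_{i\in I}$ is a Riesz basis for $\h$, there exist an orthonormal basis $\{e_i\}_{i\in I}$ of $\h$ and a bounded invertible operator $V:\h\to\h$ with $f_i=Ve_i$ for every $i$ (see \cite{Ole}); hence $Lf_i=(LV)e_i$ for all $i$, so $\{Lf_i\}_{i\in I}$ is precisely the image of the orthonormal basis $\{e_i\}_{i\in I}$ under the bounded operator $LV$.

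I will use the basic fact that, for an orthonormal basis $\{e_i\}$ and a bounded operator $U:\h\to\h$, the system $\{Ue_i\}_{i\in I}$ is a Riesz basis for $\h$ if and only if $U$ is invertible. The "if" part is immediate; for "only if" the Riesz inequalities for $\{Ue_i\}$, combined with the fact that $\sum_i c_ie_i$ ranges over a dense subset of $\h$ (with $\|\sum_i c_ie_i\|^2=\sum_i|c_i|^2$), show that $U$ is bounded below, while completeness of $\{Ue_i\}$ forces $U$ to have dense range, and a bounded-below operator with dense range is invertible — alternatively this may be quoted from \cite{Ole}. Applying this with $U=LV$: if $L$ is invertible then $LV$ is invertible and $\{Lf_i\}=\{(LV)e_i\}$ is a Riesz basis; conversely, if $\{Lf_i\}$ is a Riesz basis then $LV$ is invertible, whence $L=(LV)V^{-1}$ is invertible. (The forward direction could also be argued in the style of Proposition \ref{pro}: a Riesz basis is a frame, so $\{Lf_i\}$ being a frame makes $L$ surjective by Proposition \ref{pro}, and the $\omega$-independence of the Riesz basis $\{Lf_i\}$, applied to the coefficients in $\{f_i\}$ of an arbitrary vector of $\Ke L$, gives $\Ke L=\{0\}$; a bounded bijection is invertible.)

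Now assume $L$ is invertible. The analysis operator is computed directly: for $f\in\h$,
\[
T_Lf=\{\langle f,Lf_i\rangle\}_{i\in I}=\{\langle L^*f,f_i\rangle\}_{i\in I}=T(L^*f),
\]
so $T_L=TL^*$. For the bounds, let $\{c_i\}$ be any finitely supported scalar sequence. Using $\|Lx\|\le\|L\|\,\|x\|$ and $\|Lx\|\ge\|L^{-1}\|^{-1}\|x\|$ (the latter from $\|x\|=\|L^{-1}Lx\|\le\|L^{-1}\|\,\|Lx\|$), together with the Riesz inequalities $A\sum_i|c_i|^2\le\bigl\|\sum_i c_if_i\bigr\|^2\le B\sum_i|c_i|^2$ for $\{f_i\}$, and $\sum_i c_iLf_i=L\bigl(\sum_i c_if_i\bigr)$, we obtain
\[
\|L^{-1}\|^{-2}A\sum_i|c_i|^2\le\Bigl\|\sum_i c_iLf_i\Bigr\|^2\le\|L\|^2B\sum_i|c_i|^2,
\]
so $\|L^{-1}\|^{-2}A$ and $\|L\|^2B$ are Riesz basis bounds for $\{Lf_i\}_{i\in I}$.

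The only genuinely delicate step is the "only if" direction — deducing invertibility of $L$ (equivalently of $LV$) from the Riesz basis property of $\{Lf_i\}$ — which I expect to handle by the bounded-below-plus-dense-range argument above, or simply by citing the corresponding characterization of Riesz bases in \cite{Ole}, mirroring the style of the proof of Proposition \ref{pro}. Everything else reduces to the two norm estimates and the elementary identity for $T_L$.
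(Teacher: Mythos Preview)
Your argument is correct. The paper's own proof is a one-liner that takes the analysis-operator route rather than your synthesis-side characterization: it simply observes that the analysis operator of $\{Lf_i\}$ is $T_L=TL^*$ (the computation you also carry out), and then uses that a Bessel sequence is a Riesz basis precisely when its analysis operator is a bijection onto $\ell^2(I)$; since $T$ is already bijective, $TL^*$ is bijective iff $L^*$ is, iff $L$ is invertible. Your approach instead writes $f_i=Ve_i$ and reduces to the invertibility of $LV$, which is equivalent but frames the problem on the synthesis side. The trade-off: the paper's route is shorter and makes the formula $T_L=TL^*$ do all the work, while your argument is more self-contained and, notably, actually verifies the Riesz bounds $\|L^{-1}\|^{-2}A$ and $\|L\|^2B$, which the paper states but does not prove.
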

\begin{proof}
Since the analysis opeartor for
$\{Lf_{i}\}_{i\in I}$ is $T_{L}=TL^{*},$ $L$ is invertible
if and only if $\{Lf_{i}\}_{i\in I}$ is a Riesz basis for $\h.$
\end{proof}
\begin{cor}
If $\{f_{i}\}_{i\in I}$ is a Riesz basis for $\h$ and $L:\h\longrightarrow\h$ is
a bounded operator, then $\{f_{i}+Lf_{i}\}_{i\in I}$ is a Riesz basis for $\h$ if and only
if $I+L$ is invertible on $\h$. In this case the synthesis operator for new frame is
$T_{I+L}=T(I+L^{*})$ and the new Riesz basis bounds are $\parallel(I+L)^{-1}\parallel^{-2}A$,
$\parallel I+L\parallel^{2}B.$
\end{cor}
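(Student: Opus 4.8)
The plan is to deduce this directly from the preceding Proposition by substituting the operator $I+L$ for $L$. The key observation is purely algebraic: since $f_i+Lf_i=(I+L)f_i$ for every $i\in I$, the sequence $\{f_i+Lf_i\}_{i\in I}$ is literally the sequence $\{(I+L)f_i\}_{i\in I}$. Moreover $I+L$ is a bounded operator on $\h$, being the sum of the identity and the bounded operator $L$, so the hypotheses of the Proposition are met with $I+L$ playing the role of the bounded operator there.

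Applying the Proposition then yields at once that $\{(I+L)f_i\}_{i\in I}$ is a Riesz basis for $\h$ if and only if $I+L$ is invertible on $\h$, which is the first assertion. In the invertible case, the Proposition gives the analysis operator of the transformed basis as $T(I+L)^*$; since $(I+L)^*=I+L^*$, this equals $T(I+L^*)$, matching the claimed formula $T_{I+L}=T(I+L^*)$, the passage between ``analysis'' and ``synthesis'' operator being only a matter of terminology and requiring no extra argument. Likewise, the new Riesz basis bounds furnished by the Proposition are $\|(I+L)^{-1}\|^{-2}A$ and $\|I+L\|^2B$, exactly as stated.

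I do not anticipate any genuine obstacle here: the corollary is a verbatim specialization of the Proposition, and the only things to verify are the trivial identity $f_i+Lf_i=(I+L)f_i$, the boundedness of $I+L$, and the adjoint identity $(I+L)^*=I+L^*$. The single point worth stating explicitly is that, in contrast to the earlier frame statements where surjectivity of the perturbing operator sufficed, for Riesz bases the correct hypothesis is full invertibility of $I+L$; this is inherited directly from the Proposition and so needs no separate justification.
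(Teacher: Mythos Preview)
Your proposal is correct and matches the paper's approach exactly: the Corollary is stated in the paper without proof, as an immediate consequence of the preceding Proposition obtained by replacing $L$ with $I+L$. Your explicit checks (the identity $f_i+Lf_i=(I+L)f_i$, boundedness of $I+L$, and $(I+L)^*=I+L^*$) are precisely the trivial verifications the paper leaves implicit.
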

\begin{cor}
Let $\{f_{i}\}_{i\in I}$ be a Riesz basis for $\h$ with frame operator $S$
and $\{g_{i}\}_{i\in I}$ be its alternative dual frame. Suppose that
$-1\notin\sigma(S^{-a+b-1}).$
Then $\{S^{a}f_{i}+S^{b}g_{i}\}_{i\in I}$ is a Riesz basis for $\h$ for all real
numbers $a, b$.
\end{cor}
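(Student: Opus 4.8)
The plan is to reduce the statement to the previous corollary, which says that if $\{f_i\}_{i\in I}$ is a Riesz basis for $\h$ and $M:\h\to\h$ is bounded, then $\{f_i+Mf_i\}_{i\in I} = \{(I+M)f_i\}_{i\in I}$ is a Riesz basis if and only if $I+M$ is invertible. So the first step is to rewrite $\{S^a f_i + S^b g_i\}_{i\in I}$ in the form $\{(I+M)h_i\}_{i\in I}$ for a suitable Riesz basis $\{h_i\}$ and bounded operator $M$ built out of powers of $S$. Since $S$ is positive and invertible, every real power $S^t$ is a well-defined bounded invertible positive operator, and $\{S^a f_i\}_{i\in I}$ is again a Riesz basis for $\h$ by the preceding proposition (applied with $L=S^a$, which is invertible).

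The second step is to identify the alternative (i.e.\ canonical) dual frame $\{g_i\}$ explicitly: for a Riesz basis, the unique dual frame is $g_i = S^{-1}f_i$. Substituting, the system becomes $\{S^a f_i + S^{b} S^{-1} f_i\}_{i\in I} = \{S^a f_i + S^{b-1} f_i\}_{i\in I} = \{(S^a + S^{b-1})f_i\}_{i\in I}$. Factor out $S^a$ on the left: $S^a + S^{b-1} = S^a\big(I + S^{b-1-a}\big)$, using that powers of $S$ commute. Hence the system equals $\{(I + S^{b-1-a})\,(S^a f_i)\}_{i\in I}$, which is $\{h_i + M h_i\}$ with $h_i = S^a f_i$ (a Riesz basis) and $M = S^{-a+b-1}$.

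The third step is to apply the previous corollary: $\{h_i + Mh_i\}_{i\in I}$ is a Riesz basis for $\h$ if and only if $I+M = I + S^{-a+b-1}$ is invertible on $\h$, i.e.\ if and only if $-1\notin\sigma(S^{-a+b-1})$, which is precisely the hypothesis. This gives the conclusion. (One may also observe directly that $S^{-a+b-1}$ is a positive invertible operator, so $\sigma(S^{-a+b-1})\subseteq(0,\infty)$ and $-1$ is automatically not in the spectrum; thus the hypothesis is in fact always satisfied and the conclusion holds unconditionally for all real $a,b$ — worth a remark, though the cautious statement with the spectral hypothesis is what we prove.)

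The only real subtlety — not so much an obstacle as a point to state carefully — is the well-definedness and the algebraic bookkeeping of real powers of the positive invertible operator $S$ via the Borel functional calculus: one needs that $S^sS^t = S^{s+t}$, that each $S^t$ is bounded, positive and invertible with $(S^t)^{-1}=S^{-t}$, and that they all commute, so that the factorization $S^a+S^{b-1}=S^a(I+S^{b-1-a})$ is legitimate. All of this is standard for a positive invertible operator, so the argument is short once the reduction is set up.
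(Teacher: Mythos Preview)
Your proof is correct and is exactly the argument the paper intends: the corollary is stated there without proof, as an immediate consequence of the preceding proposition and corollary, and your reduction via $g_i=S^{-1}f_i$ together with the factorization $S^a+S^{b-1}=S^a(I+S^{-a+b-1})$ is precisely that derivation. Your closing observation that $-1\notin\sigma(S^{-a+b-1})$ holds automatically (since $S^{-a+b-1}$ is positive and invertible) is a valid extra remark not made explicit in the paper.
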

Here, we also show that the equivalence of part (1) and (2) in  Proposition 3.1 of \cite{sum}, is not true in general.
Indeed, if $T_{1}L_{1}^{*}+T_{2}L_{2}^{*}$ is an invertible
operator, then $\{L_{1}f_{i}+L_{2}g_{i}\}_{i\in I}$
is a frame for $\h$ but the inverse is not true.
\begin{exa}
Let $\{e_n\}_{n=1}^{\infty}$ be an orthonormal basis for
$\h$ and $T$ be the analysis operator of $\{e_{n}\}_{n=1}^{\infty}.$
Define a shift operator $L$ on $\h$ as in
Example \ref{el}.  Letting $L_{1}=L_{2}=L$
and $f_{n}=g_{n}=e_{n}$ for each $n\in\mathbb{N},$ in Proposition 3.1 of \cite{sum}, we see
that $\{2L(e_{n})\}_{n=1}^{\infty}$ is a frame for $\h$ but $2TL^{*}$ is not a surjective
operator. If $TL^{*}$ is a surjective operator, then for $\delta_{1}\in l^{2}(\mathbb{N})$,
there exists $h\in H$ such that $TL^{*}(h)=\delta_{1}$ and so $\langle L(e_{1}), h\rangle=1,$ which is
a contradiction.
\end{exa}
\begin{prop}
Let $\{f_{i}\}_{i\in I}$ and $\{g_{i}\}_{i\in I}$ be Bessel sequences in $\h$ with analysis
operators $T_{1}$, $T_{2}$ and frame operators $S_{1}$, $S_{2}$, respectively.
Also let $L_{1}, L_{2}:\h\longrightarrow \h$. Then the following are equivalent:
\\(1) $\{L_{1}f_{i}+L_{2}g_{i}\}_{i\in I}$ is a Riesz basis for $\h$.
\\(2) $T_{1}L_{1}^{*}+T_{2}L_{2}^{*}$ is an invertible operator on $\h$.
\end{prop}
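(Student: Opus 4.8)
The statement asserts the equivalence of (1) $\{L_1f_i+L_2g_i\}_{i\in I}$ being a Riesz basis and (2) $T_1L_1^*+T_2L_2^*$ being invertible on $\h$. The guiding principle, already used repeatedly in this section, is that the \emph{analysis operator} of the combined sequence is exactly $T_1L_1^*+T_2L_2^*$, and a sequence is a Riesz basis precisely when its analysis operator is a bounded bijection (equivalently, bounded below and surjective) onto $\ell^2(I)$ — wait, more precisely: $\{h_i\}$ is a Riesz basis for $\h$ iff its synthesis operator $U\colon\ell^2(I)\to\h$, $U(\{c_i\})=\sum c_ih_i$, is a bounded invertible operator, equivalently iff its analysis operator $U^*\colon\h\to\ell^2(I)$ is invertible. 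So the first step is to identify the analysis operator of $\{L_1f_i+L_2g_i\}_{i\in I}$ as an operator from $\h$ to $\ell^2(I)$ and check it coincides with $T_1L_1^*+T_2L_2^*$.

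First I would verify that $T_1L_1^*+T_2L_2^*$ is a well-defined bounded operator: since $\{f_i\}$ and $\{g_i\}$ are Bessel, $T_1,T_2$ are bounded from $\h$ into $\ell^2(I)$, and $L_1^*,L_2^*$ are bounded on $\h$, so the composition and sum are bounded $\h\to\ell^2(I)$. But note the statement writes ``invertible operator \emph{on} $\h$,'' which is a minor abuse: the operator $T_1L_1^*+T_2L_2^*$ maps $\h$ into $\ell^2(I)$, so "invertible" should be read as "a bounded bijection onto its natural target," which for a Riesz basis of a separable $\h$ means onto $\ell^2(I)$ (after identifying $\h\cong\ell^2(I)$ via some fixed orthonormal basis, as is implicitly done earlier in the paper where analysis operators are treated as maps $\h\to\h$). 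I would fix this identification at the outset to keep the bookkeeping clean.

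The key computation is: for $h\in\h$, the $i$-th coordinate of the analysis operator of $\{L_1f_i+L_2g_i\}$ applied to $h$ is $\langle h, L_1f_i+L_2g_i\rangle = \langle L_1^*h, f_i\rangle + \langle L_2^*h, g_i\rangle$, which is the $i$-th coordinate of $T_1(L_1^*h) + T_2(L_2^*h) = (T_1L_1^*+T_2L_2^*)(h)$. Hence the analysis operator of the combined sequence \emph{is} $T_1L_1^*+T_2L_2^*$. Then (1)$\Leftrightarrow$(2) follows from the standard characterization (e.g.\ the Riesz-basis analog of the frame results cited as Corollary 5.3.2 in \cite{Ole}, or directly from the definition of Riesz basis via an invertible synthesis operator): $\{L_1f_i+L_2g_i\}_{i\in I}$ is a Riesz basis for $\h$ if and only if its analysis operator is invertible. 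One direction ((1)$\Rightarrow$(2)) is immediate once the identification is made; for (2)$\Rightarrow$(1) I would note that an invertible analysis operator exhibits $\{L_1f_i+L_2g_i\}$ as the image of the standard orthonormal basis of $\ell^2(I)$ under the bounded invertible synthesis operator, which is exactly the definition of a Riesz basis.

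The main obstacle — really the only subtlety — is the clash between the paper's informal phrasing (analysis operator ``on $\h$'', invertibility ``on $\h$'') and the fact that analysis operators land in $\ell^2(I)$; one must be careful that ``invertible'' is interpreted consistently in (1) and (2), and in particular that surjectivity onto all of $\ell^2(I)$ (not merely boundedness below) is what is needed to upgrade a frame-type conclusion to a Riesz-basis conclusion. Once that identification is pinned down, the proof is a one-line coordinate computation plus an appeal to the Riesz-basis characterization already invoked earlier in the paper. I would therefore present it as: (i) compute the analysis operator of the combined sequence and recognize it as $T_1L_1^*+T_2L_2^*$; (ii) invoke that a sequence is a Riesz basis iff its analysis operator is invertible; (iii) conclude.
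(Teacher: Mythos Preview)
Your proposal is correct and follows essentially the same approach as the paper: compute the analysis operator of $\{L_1f_i+L_2g_i\}_{i\in I}$ coordinatewise, recognize it as $T_1L_1^*+T_2L_2^*$, and invoke the characterization of Riesz bases via invertibility of the analysis operator. The paper's proof is precisely this one-line computation, without your additional (but valid) remarks about the $\h$ versus $\ell^2(I)$ target-space identification.
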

\begin{proof}
$(1)\Leftrightarrow(2)$ $\{L_{1}f_{i}+L_{2}g_{i}\}_{i\in I}$ is a Riesz basis for
$\h$ if and only if its analysis operator $T$ is invertible on $\h$ where
\begin{eqnarray*}
Tf&=&\{\langle f, L_{1}f_{i}+L_{2}g_{i}\rangle\}_{i\in I}\\&=&
\{\langle L_{1}^{*}f, f_{i}\rangle+\langle L_{2}^{*}f, g_{i}\rangle\}_{i\in I}\\&=&
T_{1}L_{1}^{*}f+T_{2}L_{2}^{*}f.
\end{eqnarray*}
\end{proof}
\section{Applications to Gabor frames}
For $x,y\in\Bbb{R}$ we consider the operators $E_x$ and $T_y$ on
$L^2(\Bbb{R})$ defined by $(E_xf)(t)=e^{2\pi ixt}f(t)$ and
$(T_yf)(t)=f(t-y)$. It is easy to prove that $E_x$ and $T_y$ are
unitary with $E^*_x=E_{-x}$ and $T^*y=T_{-y}$. A Gabor frame is a
frame for $L^2(\Bbb{R})$ of the form
$\{E_{mb}T_{na}g\}_{m,n\in\Bbb{Z}}$, where $a,b>0$ and $g\in
L^2(\Bbb{R})$ is a fixed function. We use $(g,a,b)$ to denote
$\{E_{mb}T_{na}g\}_{m,n\in\Bbb{Z}}$.
\begin{lem}\label{lem}
Let $x,y\in\Bbb{R}$ and $c\in\Bbb{C}$ with $|c|=1$. Then the
following are equivalent:
\begin{itemize}
  \item [$(i)$] $I+cT_xE_y$ is  a surjective operator on
  $L^2(\Bbb{R})$.
  \item [$(ii)$] $I+cE_yT_x$ is  a surjective operator on
  $L^2(\Bbb{R})$.
  \end{itemize}
\end{lem}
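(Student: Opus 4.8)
The plan is to exploit the fact that the operators $T_xE_y$ and $E_yT_x$ differ only by a unimodular scalar, and then transfer surjectivity from one to the other. First I would recall the commutation relation between modulation and translation on $L^2(\Bbb R)$: a direct computation gives $E_yT_x = e^{-2\pi i xy}\,T_xE_y$, or equivalently $T_xE_y = e^{2\pi i xy}\,E_yT_x$. So if we set $\lambda = e^{2\pi i xy}$, which is unimodular, we have $cT_xE_y = c\lambda\, E_yT_x$, and since $|c\lambda| = |c| = 1$, the two operators appearing in $(i)$ and $(ii)$ are both of the form $I + (\text{unimodular scalar})\cdot E_yT_x$ — but with \emph{different} scalars, so the equivalence is not literally trivial and needs the extra structure of $E_yT_x$ itself being (a scalar multiple of) a unitary operator.

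Next I would observe that $T_x$ and $E_y$ are unitary, hence $U := E_yT_x$ is unitary, and likewise $V := T_xE_y = \lambda U$ is unitary. The key algebraic identity is then that $I + cV = I + c\lambda U$ and $I + cU$ are related by conjugation: since $U$ is unitary, $U^*(I + c\lambda U)U = U^*U + c\lambda U^*UU = I + c\lambda U$... that does not immediately simplify, so instead I would argue directly with the spectrum. A bounded operator of the form $I + \mu W$ with $W$ unitary and $|\mu|=1$ is surjective if and only if it is invertible (being normal, surjectivity forces injectivity and vice versa via the closed-range/normality argument, exactly as in the proof of Proposition~\ref{pro}), and $I+\mu W$ is invertible if and only if $-1/\mu \notin \sigma(W)$, i.e. $-\bar\mu \notin \sigma(W)$. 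So $(i)$ holds iff $-\bar c \notin \sigma(V)$ and $(ii)$ holds iff $-\bar c \notin \sigma(U)$. Thus it suffices to show $-\bar c \in \sigma(U) \iff -\bar c \in \sigma(V)$.

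Finally I would close the loop using $V = \lambda U$ together with the fact that $U$ and $V$ are unitarily equivalent. Indeed, $T_x U T_x^{-1} = T_x E_y T_x T_x^{-1} = T_x E_y = V$, so $V$ is conjugate to $U$ by the unitary $T_x$; hence $\sigma(V) = \sigma(U)$, and in particular $-\bar c \in \sigma(U) \iff -\bar c \in \sigma(V)$, which by the previous paragraph gives $(i)\Leftrightarrow(ii)$. The main obstacle — really the only subtle point — is justifying the reduction ``surjective $\iff$ invertible'' for these operators; this is where normality of $I + \mu W$ is used, and it parallels the argument already invoked for Proposition~\ref{pro} (a surjective bounded operator with closed range whose adjoint is injective is invertible, and for a normal operator surjectivity is equivalent to bounded-below, hence to invertibility). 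Everything else is the elementary Weyl commutation relation for $E_y$ and $T_x$ plus invariance of the spectrum under unitary conjugation. An alternative, even more self-contained route avoids the spectrum entirely: from $V = \lambda U$ and $U = \lambda^{-1} T_x^{-1} V T_x$ one can write $I + cU$ explicitly as $T_x^{-1}(I + c\lambda^{-1} \cdot ?)T_x$ and match it to $I + cV$; but the spectral argument above is cleaner, so that is the one I would carry out.
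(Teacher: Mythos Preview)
Your argument is correct, and it shares the same first step as the paper --- for $W$ unitary and $|\mu|=1$ the operator $I+\mu W$ is normal, so surjectivity is equivalent to invertibility --- but you finish by a different route. The paper passes from $I+cT_xE_y$ to its adjoint $I+\bar c E_{-y}T_{-x}$ (invertibility is preserved), and then multiplies by the unitary $E_yT_x$ to reach $I+cE_yT_x$; it is a short chain of adjoint/inverse manipulations, with the surjective-iff-invertible step outsourced to Proposition~2 of \cite{cas}. You instead exhibit the conjugation $T_x(E_yT_x)T_x^{-1}=T_xE_y$, deduce $\sigma(E_yT_x)=\sigma(T_xE_y)$, and conclude that $-\bar c$ lies in one spectrum iff it lies in the other. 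Your route is more conceptual and sidesteps the need to track unimodular phases through the adjoint computation; the paper's is more hands-on but leans on an external citation where you supply the normality argument yourself. Two minor remarks: the sign in your Weyl relation is reversed (one has $T_xE_y=e^{-2\pi ixy}E_yT_x$), though this is immaterial since only $|\lambda|=1$ is used; and your appeal to ``the argument already invoked for Proposition~\ref{pro}'' is misplaced --- that proof does not invoke normality --- but the fact you need (a surjective normal operator is invertible) is standard and your justification of it is fine.
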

\begin{proof}
Using Proposition 2 of \cite{cas}, we infer that $I+cT_xE_y$ is
surjective if and only if $I+cT_xE_y$ is invertible. So $I+cT_xE_y$
is invertible if and only if $I+\overline{c}T_{-x}E_{-y}$ is
invertible, and $I+\overline{c}T_{-x}E_{-y}$ is invertible if and
only if $I+cE_yT_x$ is invertible.
\end{proof}
\begin{cor}\label{cor}
Let $x,y\in\Bbb{R}$ and $c\in\Bbb{C}$. If $I+cT_xE_y$ is a
surjective operator on $L^2(\Bbb{R})$, then there exists $\delta>0$
such that $\|(I+cT_xE_y)(g)\|\geqslant \delta\|g\|$ for all $g\in
L^2(\Bbb{R})$.
\end{cor}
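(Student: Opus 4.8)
The plan is to reduce the statement to the case $|c|=1$ and then invoke Lemma~\ref{lem} together with the open mapping theorem. First I would dispose of the trivial case $c=0$: here $I+cT_xE_y=I$, which is certainly surjective and satisfies $\|(I+cT_xE_y)(g)\|=\|g\|$, so any $\delta\in(0,1]$ works. So assume $c\neq 0$ and write $c=|c|\,c_0$ with $|c_0|=1$. The operator $T_xE_y$ is unitary (a product of unitaries up to a unimodular scalar, since $T_x$ and $E_y$ are unitary), hence $\|T_xE_y\|=1$, which gives us a handle on scaling.

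The key observation is that surjectivity of $I+cT_xE_y$ is equivalent to surjectivity of $I+c_0T_xE_y$ when $|c|=1$; but when $|c|\neq 1$ one cannot directly rescale. Instead I would argue as follows. By Proposition~2 of \cite{cas} (the same tool used in Lemma~\ref{lem}), for operators of the form $I+\lambda W$ with $W$ unitary, surjectivity is equivalent to invertibility. Applying this with $W=T_xE_y$ and $\lambda=c$: since $I+cT_xE_y$ is surjective, it is invertible, so $(I+cT_xE_y)^{-1}$ exists as a bounded operator on $L^2(\Bbb R)$. Setting $\delta:=\|(I+cT_xE_y)^{-1}\|^{-1}>0$, we get for every $g\in L^2(\Bbb R)$,
\[
\|g\|=\big\|(I+cT_xE_y)^{-1}(I+cT_xE_y)(g)\big\|\leqslant \|(I+cT_xE_y)^{-1}\|\,\|(I+cT_xE_y)(g)\|,
\]
which rearranges to $\|(I+cT_xE_y)(g)\|\geqslant \delta\|g\|$, as desired.

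Thus the real content is the passage from ``surjective'' to ``invertible'', and the main obstacle is justifying that Proposition~2 of \cite{cas} applies when $|c|\ne 1$ (Lemma~\ref{lem} was only stated for $|c|=1$). I would handle this by noting that $T_xE_y$ equals a unimodular constant times $E_yT_x$ by the commutation relation $T_xE_y=e^{-2\pi i xy}E_yT_x$, but more directly: Proposition~2 of \cite{cas} concerns operators of the form $I+K$ where $K$ is a composition operator / weighted shift type object, and the relevant hypothesis is really that $T_xE_y$ (hence $cT_xE_y$ for any scalar $c$) has no residual spectrum obstruction — one checks that $cT_xE_y$ is a scalar multiple of a unitary, so its spectrum is contained in the circle of radius $|c|$, and the argument of \cite{cas} that surjectivity forces injectivity goes through verbatim for $I+cT_xE_y$ regardless of $|c|$. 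If that citation genuinely requires $|c|=1$, an alternative is: surjectivity of $I+cT_xE_y$ means $0\notin\sigma(I+cT_xE_y)$ on the level of the range, and since $cT_xE_y$ is a normal operator (scalar times unitary), $I+cT_xE_y$ is normal, and a normal operator is surjective if and only if it is bounded below if and only if it is invertible; this gives the conclusion cleanly without \cite{cas} at all. Either route finishes the proof.
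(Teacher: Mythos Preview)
The paper gives no explicit proof of this corollary; it is meant to follow immediately from the first line of the proof of Lemma~\ref{lem}---surjectivity of $I+cT_xE_y$ forces invertibility by Proposition~2 of \cite{cas}---after which the lower bound is the standard estimate $\|Tg\|\geqslant\|T^{-1}\|^{-1}\|g\|$. Your main line of argument is exactly this, so it agrees with the paper's intended route.

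Your alternative via normality is a genuine improvement and worth keeping: since $T_x$ and $E_y$ are unitary, so is $T_xE_y$, hence $I+cT_xE_y$ is normal for every $c\in\Bbb C$; a surjective normal operator $N$ satisfies $\ker N=\ker N^*=(\mathrm{ran}\,N)^\perp=\{0\}$, so it is injective and therefore invertible by the open mapping theorem. This is self-contained, covers all $c$ without any worry about the precise hypotheses of Proposition~2 in \cite{cas}, and is the cleanest justification. The detour through writing $c=|c|c_0$ is never actually used and should be deleted; the speculative description of what Proposition~2 of \cite{cas} says should likewise be dropped in favor of the normality argument.
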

In the following, we intend to improve Proposition 4.1 of
\cite{sum}.
\begin{thm}\label{thm}
Let $x,y\in\Bbb{R}$ such that $x\neq0$,  $xy\in\Bbb{Z}$ and let
$c\in\Bbb{C}$ with $|c|=1$. Then
$I+cE_yT_x:L^2(\Bbb{R})\longrightarrow L^2(\Bbb{R})$ is  not
surjective.
\end{thm}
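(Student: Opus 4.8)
The plan is to exploit the commutation relation between $E_y$ and $T_x$. Since $(E_yT_xf)(t)=e^{2\pi iy t}f(t-x)$ while $(T_xE_yf)(t)=e^{2\pi iy(t-x)}f(t-x)=e^{-2\pi ixy}(E_yT_xf)(t)$, the hypothesis $xy\in\mathbb{Z}$ forces $e^{-2\pi ixy}=1$, hence $E_yT_x=T_xE_y$. In particular $U:=cE_yT_x$ is a unitary operator (composition of the unitaries $cE_y$ and $T_x$), and moreover it satisfies a clean structural identity: iterating, $U^n=c^nE_{ny}T_{nx}$ because all the factors now commute and the cross-terms $e^{-2\pi i(jx)(ky)}$ that would otherwise appear are trivial. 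This turns $I+U$ into something we can analyze via a geometric-series / spectral argument.

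First I would record $E_yT_x=T_xE_y$ from $xy\in\mathbb{Z}$, so that $I+cE_yT_x=I+cT_xE_y$; by Corollary~\ref{cor} it therefore suffices to show $I+cT_xE_y$ is not surjective, or equivalently (combining Lemma~\ref{lem} with Proposition~2 of \cite{cas}) that $I+cE_yT_x$ is not invertible. Next I would show $U=cE_yT_x$ is unitary with $-1$ in its spectrum. The key computation is the one above: $U^n=c^nE_{ny}T_{nx}$ for all $n\in\mathbb{Z}$. Then for the putative inverse, if $I+U$ were invertible with $\|(I+U)^{-1}\|=M$, Corollary~\ref{cor} gives $\|(I+U)g\|\ge M^{-1}\|g\|$ for all $g$. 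I would contradict this by producing unit vectors $g$ with $\|(I+U)g\|$ arbitrarily small; concretely, take $g$ supported on a large interval and use that $U^n g$ is a unimodular modulation/translation of $g$, so that an averaged vector $g_N=\tfrac{1}{N}\sum_{n=0}^{N-1}(-1)^nU^n g$ (or the analogous Cesàro mean adapted to the eigenvalue $-1$) satisfies $\|(I+U)g_N\|=\tfrac{1}{N}\|g-(-1)^NU^Ng\|\le \tfrac{2}{N}\|g\|\to 0$ while $\|g_N\|$ stays bounded below — the latter because the summands $(-1)^nU^ng=(-c)^nE_{ny}T_{nx}g$ become "almost orthogonal" in $L^2$ once $x\ne0$ (their supports translate off to infinity) for $g$ compactly supported, so $\|g_N\|\approx N^{-1/2}\|g\|$ is not small relative to $\|(I+U)g_N\|$. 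Rescaling $g_N$ to a unit vector then violates the lower bound from Corollary~\ref{cor}.

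The main obstacle is the last point: making precise that $-1\in\sigma(U)$ for $U=cE_yT_x$, i.e. that $I+U$ fails to be bounded below. The hypothesis $x\ne0$ is exactly what drives this — the translations $T_{nx}$ move mass to infinity, so no nonzero $L^2$ function can be (approximately) fixed by $U$ up to sign in a way that stays coherent, which is what would be needed for $I+U$ to be invertible; phrased spectrally, the unitary $U=cT_xE_y$ has purely continuous spectrum filling the whole circle (this is a standard fact about such translation–modulation operators, and in particular $-1\in\sigma(U)$), so $I+U$ is injective with dense but non-closed range. I would present this via the explicit approximate-eigenvector construction above rather than invoking spectral theory, since it keeps the argument self-contained and directly contradicts Corollary~\ref{cor}. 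One clean way to build the approximate eigenvectors is to pass to the Fourier/Zak-transform side, where $E_y$ becomes translation and $T_x$ becomes modulation, and exhibit functions concentrated where the symbol of $I+U$ is close to $0$; but the translation-to-infinity argument in the time domain is enough and avoids extra machinery.
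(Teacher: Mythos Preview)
Your proposal is correct and takes essentially the same route as the paper: construct approximate $(-1)$-eigenvectors of $U=cE_yT_x$ as alternating sums $\sum_k(-1)^kU^kg$ with $g$ compactly supported, use disjoint supports (from $x\ne 0$) to bound the norm below and telescoping to make $\|(I+U)(\cdot)\|$ small, then contradict Corollary~\ref{cor}. The only difference is presentational: the paper makes the concrete choice $g=\chi_{[0,x)}$ (taking $x>0$ without loss of generality) and computes the two norms explicitly as $\|f\|^2=nx$ and $\|(I+cE_yT_x)f\|^2=2x$, whereas you frame the same construction abstractly as a Ces\`aro-type mean.
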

\begin{proof}
It is enough we take $x>0$. Let $f:\Bbb{R}\longrightarrow\Bbb{C}$ be
a function defined by
\[f(t):=\sum_{k=1}^n (-1)^kc^ke^{2\pi ikyt}\chi_{[kx, (k+1)x)}(t).\]
By a simple computation, we get
\[\|f\|^2=\int_{\Bbb{R}}|f(t)|^2\,dt=nx,\quad \|(I+cE_yT_x)f\|^2=2x.\]
Therefore $f\in L^2(\Bbb{R})$ and Corollary \ref{cor} implies that
$I+cE_yT_x$ is not surjective.
\end{proof}
\begin{cor}\label{cor2}
Let $x,y\in\Bbb{R}$ such that $x\neq0$,  $xy\in\Bbb{Z}$ and let
$c\in\Bbb{C}$ with $|c|=1$. If $(g,a,b)$ is a Gabor frame, then
$(g+cE_yT_xg,a,b)$ is not a Gabor frame.
\end{cor}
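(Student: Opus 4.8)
The plan is to reduce Corollary \ref{cor2} to Theorem \ref{thm} via Proposition \ref{pro}. Since $(g,a,b) = \{E_{mb}T_{na}g\}_{m,n\in\Bbb{Z}}$ is a Gabor frame, it is in particular a frame for $L^2(\Bbb{R})$, and the new system $(g+cE_yT_xg,a,b) = \{E_{mb}T_{na}(g+cE_yT_xg)\}_{m,n\in\Bbb{Z}}$ has entries of the form $E_{mb}T_{na}(I+cE_yT_x)g$. So the first step is to commute the operator $I+cE_yT_x$ past the time-frequency shift $E_{mb}T_{na}$, i.e. to show that $E_{mb}T_{na}(I+cE_yT_x)g = L(E_{mb}T_{na}g)$ for a single fixed bounded operator $L$ independent of $m,n$.

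The commutation is where the hypotheses $xy\in\Bbb{Z}$ and $|c|=1$ enter. Using the canonical commutation relation $T_xE_y = e^{2\pi ixy}E_yT_x$ and the fact that $E_{mb}$ and $T_{na}$ commute with $E_y$ up to unimodular scalars and with $T_x$ up to unimodular scalars, one computes that $E_{mb}T_{na}(I+cE_yT_x) = (I+c'T_xE_y)E_{mb}T_{na}$ (or the analogous identity with $E_yT_x$), where $c'$ is some unimodular constant depending on $m,n,x,y$. The point is that after conjugation the perturbation is again a rank-preserving time-frequency shift of the same type; the condition $xy\in\Bbb Z$ guarantees the scalar stays under control and $|c|=1$ is preserved. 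Thus with $L := I+cE_yT_x$ (after possibly swapping the order of the factors, which is harmless by Lemma \ref{lem}), we have $\{E_{mb}T_{na}(g+cE_yT_xg)\} = \{L\,E_{mb}T_{na}g\}$ up to unimodular scalar multiples on each entry, which does not affect the frame property.

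Now apply Proposition \ref{pro}: the system $\{L\,E_{mb}T_{na}g\}_{m,n\in\Bbb{Z}}$ is a frame for $L^2(\Bbb{R})$ if and only if $L = I+cE_yT_x$ is surjective. But Theorem \ref{thm} asserts exactly that $I+cE_yT_x$ is \emph{not} surjective under the hypotheses $x\neq 0$, $xy\in\Bbb{Z}$, $|c|=1$. Hence $(g+cE_yT_xg,a,b)$ fails to be a frame. (One must be slightly careful: if the conjugation above naturally produces $I+c'T_xE_y$ rather than $I+c'E_yT_x$, then invoke Lemma \ref{lem} to transfer the non-surjectivity between the two forms; both are equivalent, and non-surjectivity of one gives non-surjectivity of the other.)

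The main obstacle is the bookkeeping of scalar factors in the commutation step: one has to verify carefully that $E_{mb}T_{na}$ conjugates $I+cE_yT_x$ into an operator of the form $I + (\text{unimodular})\cdot(\text{time-frequency shift by }(x,y))$, and that the extra unimodular prefactors appearing on individual frame elements genuinely cancel out of the frame inequalities. Once that is cleanly established, the rest is an immediate combination of Proposition \ref{pro}, Lemma \ref{lem}, and Theorem \ref{thm}. In the write-up I would state the commutation identity as a short computation, note that multiplying frame vectors by unimodular constants leaves frame bounds unchanged, and then chain the three cited results.
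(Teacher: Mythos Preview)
Your strategy mirrors the paper's proof exactly: rewrite $E_{mb}T_{na}(g+cE_yT_xg)$ as $(I+dT_xE_y)(E_{mb}T_{na}g)$ for some unimodular $d$, then chain Proposition~\ref{pro}, Lemma~\ref{lem}, and Theorem~\ref{thm}. The paper simply asserts ``there exists $d\in\Bbb{C}$ with $|d|=1$'' satisfying this identity and proceeds; you are more explicit in noting that this $d$ (your $c'$) may depend on $m,n$, and you correctly anticipate that this is the only real bookkeeping issue.

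However, your proposed resolution of the $(m,n)$-dependence is not right. You claim the resulting system equals $\{L\,E_{mb}T_{na}g\}$ ``up to unimodular scalar multiples on each entry''. That is false: the $(m,n)$-dependent constant sits \emph{inside} the operator, so the frame element is $(I+c'_{m,n}T_xE_y)(E_{mb}T_{na}g)$, and for $c'_{m,n}\neq c$ this is not a scalar multiple of $(I+cE_yT_x)(E_{mb}T_{na}g)$, since the two summands $E_{mb}T_{na}g$ and $T_xE_yE_{mb}T_{na}g$ pick up different phases. Concretely, using $E_\omega T_\tau=e^{2\pi i\omega\tau}T_\tau E_\omega$ one finds $c'_{m,n}=c\,e^{2\pi i(mbx-nay)}$, which genuinely varies with $(m,n)$ unless $bx,ay\in\Bbb{Z}$. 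Thus Proposition~\ref{pro} does not apply with a single operator $L$, and your ``unimodular prefactors on frame elements'' remark does not close the gap. The paper's own proof glosses over exactly this point by never acknowledging that its $d$ depends on $(m,n)$; so your proposal is faithful to the paper's argument, but the obstacle you flagged is real and neither your fix nor the paper's one-line assertion resolves it.
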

\begin{proof}
There exists $d\in\Bbb{C}$ with $|d|=1$ such that
$E_{mb}T_{na}(g+cE_yT_xg)=(I+dT_xE_y)(E_{mb}T_{na}g)$. If
$(g+cE_yT_xg,a,b)$ is  a Gabor frame, then $I+dT_xE_y$ is surjective
(invertible) on $L^2(\Bbb{R})$ by Proposition \ref{pro}. So
$I+dE_yT_x$ is surjective by Lemma \ref{lem}. Using Theorem
\ref{thm}, we get a contradiction.
\end{proof}

\end{document}